\newcommand{\dt}{\,{\rm d}t}
\begin{document}

\newtheoremstyle{mtstyle} 
    {\topsep  }                    
    {\topsep}                    
    {\itshape}                   
    {}                           
    {\bfseries}                   
    {.}                          
    {.7em}                       
    {}  

\theoremstyle{mtstyle}
\newtheorem{theorem}{Theorem}[section]
\newtheorem{defi}[theorem]{Definition}
\newtheorem{lemma}[theorem]{Lemma}
\newtheorem{exam}[theorem]{Example}
\newtheorem{corollary}[theorem]{corollary}
\renewcommand*{\proofname}{Proof}
{\Large \bf {Inverse problem for Dirac operators with a small delay }}

\begin{center}
{\bf  Neboj\v{s}a Djuri\'{c} and Biljana Vojvodi\'{c}}
\end{center}

\noindent
{\bf Abstract:}
This paper addresses inverse spectral problems associated with Dirac-type operators with a constant delay, specifically when this delay is less than one-third of the interval length. Our research focuses on eigenvalue behavior and operator recovery from spectra. We find that two spectra alone are insufficient to fully recover the potentials. Additionally, we consider the Ambarzumian-type inverse problem for Dirac-type operators with a delay. Our results have significant implications for the study of inverse problems related to the differential operators with a constant delay and may inform future research directions in this field.

\medskip
\noindent
 {\bf Keywords:} Dirac-type operator, constant delay, functional-differential operator, inverse spectral problem. \\
 {\bf MSC 2020:} 34A55, 34K29.

\section{Introduction}\label{sec1}

\medskip
The main goal of this paper is to study a two spectra inverse problem of recovering a system of differential equations with a delay.  Specifically, we consider the boundary value problems ${\cal B}_{j}(a,p,q), $ $j=1,2,$ for Dirac-type system  of the form
\begin{equation}\label{1}
By'(x)+Q(x)y(x-a)=\lambda y(x), \quad 0<x<\pi,
\end{equation}
\begin{equation*}
y_1(0) = y_j(\pi) = 0,
\end{equation*}
where 
\begin{equation*}
B = \begin{bmatrix} 0 & 1 \\ -1 & 0 \end{bmatrix},\;\; y(x) = \begin{bmatrix} y_1(x) \\ y_2(x) \end{bmatrix},\;\; Q(x) = \begin{bmatrix} p(x) & q(x) \\ q(x) & -p(x) \end{bmatrix},
\end{equation*}
as $p(x), q(x) \in L_2(0,\pi)$ are complex-valued functions with $Q(x)=0$ on $(0,a)$ and the delay $a \in (0,\frac{\pi}{3})$. Let $\{\lambda_{n,j}\}_{n\in\mathbb{Z}},$ $j=1,2,$ be eigenvalues of ${\cal B}_{j}(a,p,q)$. The inverse problem is formulated as follows. 

\vspace{0.3cm}
\noindent
\textbf{Inverse problem 1.} Given the two spectra $\{\lambda_{n,j}\}_{n\in\mathbb{Z}}$, $j=1,2,$ construct $p(x),$ $q(x).$ 

\vspace{0.3cm}
\noindent
The idea of using two sequences of eigenvalues for a Dirac-type system on a finite interval originated with Gasymov's and Dzabiev's classic paper \cite{Gasymov} where they developed a constructive solution based on transformation operators and obtained the characterization of the spectral data. Two-spectra inverse problems for Dirac-type systems have been considered in \cite{Albeverio, Eckhardt}. Research on inverse problems for the Dirac  operator with a delay started a few years ago and the first result in this direction is the paper \cite{ButerinD}. In that paper the authors restrict themselves to the case of ``large delay'' when the dependence of the characteristic functions on the potential is linear. For the considered case, however, they achieved their objectives by answering a full range of questions regarding uniqueness, solvability, and uniform stability. Very recently, progress has been made in the nonlinear case in the paper \cite{DjuricVo}, by proving that two spectra uniquely determine potentials if $a \in [2\pi/5, \pi/2),$ yet it is not possible in the case when $a \in [\pi/3, 2\pi/5)$. In \cite{WangY}, further research has been made for the case when $a \in [2\pi/5, \pi/2)$ by giving a necessary and sufficient condition for the solvability and formulating the stability of the inverse problem. The question of whether two spectra are enough to uniquely recover potentials for $a < \pi/3$ remained unanswered. 
In this paper, we filled this gap by giving a negative answer to the question of whether the unique construction of the potentials is possible from the two spectra.

The answer to this question will also be valuable for addressing similar issues with other types of differential operators with delays. A major avenue for further research involves boundary value problems associated with differential equations that include two delays. In the papers \cite{VojvodicDV}, \cite{VojvodicVD} authors study four boundary value problems $\mathcal{D}_{j}(P,Q,m), m=0,1,\; j=1,2,$ for Dirac-type system of the form
\begin{equation}\label{2}
    By'(x)+(-1)^{m}P(x)y(x-a_{1})+Q(x)y(x-a_{2})=\lambda y(x), \; 0<x<\pi,
 \end{equation}
\begin{equation*}\
y_{1}(0)=y_{j}(\pi)= 0,
\end{equation*}
where $\frac{\pi}{3}\leq a_1<a_2<\pi,$
\begin{equation*}
P(x)=\begin{bmatrix}
p_{1}(x) & p_{2}(x)\\
p_{2}(x) & -p_{1}(x)
\end{bmatrix},\hspace{2mm}
Q(x)=\begin{bmatrix}
q_{1}(x) & q_{2}(x)\\
q_{2}(x) & -q_{1}(x)
\end{bmatrix},
\end{equation*}
and $p_{1}(x),p_{2}(x),q_{1}(x), q_{2}(x)\in L^{2}[0,\pi]$ are complex-valued functions such that
\[ P(x)=0, \hspace{2mm} x\in(0,a_{1}), \hspace{2mm} Q(x)=0, \hspace{2mm}  x\in (0,a_{2}).\]

\noindent
Let $\{\lambda_{n,j}^{m}\}$ be the spectrum of the boundary value problem $\mathcal{D}_{j}(P,Q,m),$ $j=1,2,$ $m=0,1,$ and let us assume that delays $a_{1}$ and $a_{2}$ are known. In the papers \cite{VojvodicDV} and \cite{VojvodicVD} the authors consider the following inverse problem.

\vspace{0.3cm}
\noindent
{\bf Inverse Problem 2.} Given the four spectra $\{\lambda_{n,j}^{m}\}_{n\in\mathbb{Z}},$ $j=1,2,$ $m=0,1$, construct the matrix-functions $P(x)$ and $Q(x).$

\vspace{0.3cm}
\noindent
It is obvious that equation (\ref{2}) becomes equation (\ref{1}) when $P(x)=0$ and then Inverse problem 2 reduces to Inverse problem 1. It has been proved in the  paper \cite{VojvodicVD} that Inverse problem 2 has a unique solution as soon as the delays $a_1, a_2 \in [2\pi/5,\pi).$  One can easily conclude that the solution of Inverse problem 2 is not unique in the case $a_2 \in [\pi/3, 2\pi/5).$  Indeed, it has been proved in the paper \cite{DjuricVo} that the solution of Inverse problem 1 is not unique for $a \in [\pi/3,2\pi/5)$, therefore, taking $P(x)=0$ in the equation (\ref{2}), we obtain the same conclusion for Inverse problem 2. The case $a_1 \in [\pi/3, 2\pi/5)$ and $a_2 \in [2\pi/5, \pi)$ has been considered in the paper\cite{VojvodicDV}. It  has been proved that Inverse problem 2 has a unique solution if $2a_1+a_2/2 \geq \pi$, while in the case $2a_1+a_2/2 < \pi$ it does not. \\
In the paper \cite{WangYBD} the authors introduced Dirac-type operators with a global constant delay on a star graph consisting of $m$ equal edges. They proved that the uniqueness theorem is valid in the case when the delay is greater than one-half of the length of the interval.

A large number of differential equations with delays, predominantly sourced from the literature in biological sciences, await exploration. These encompass different typies of models extending across population biology, physiology, epidemiology, economics, neural networks, and the intricate control mechanisms of mechanical systems. The application of differential equations with delay is covered in books by Erneux \cite{Erneux}, Polyanin \cite{Polyanin} and Smith \cite{Smith}. Also,  Norkin \cite{Norkin} dealt with models in which occurs the Sturm-Liouville equation with a delay 
\begin{equation*}
-y''(x)+q(x)y(x-a)=\lambda y(x), \ 0<x<\pi.
\end{equation*}
For this reason, inverse spectral problems for Sturm-Liouville operators with delay are studied in detail. Some important results from this field can be found in the papers \cite{BondarenkoY, ButerinPY, DjuricB1, DjuricB2, DjuricB3, FreilingY, Pikula, PikulaVV, VladicicBV}. In addition, inverse problems for Sturm-Liouville operators with two delays have been thoroughly investigated in papers \cite{VojvodicKC, VojvodicV, VojvodicV2}.

The paper is organized as follows. In Section 2 we describe characteristic functions and study asymptotic behavior of eigenvalues. In Section 3 we construct the class of iso-bispectral potentials  $p(x),q(x)$ that have the same characteristic functions. In this way, we will show that two spectra of boundary value problems ${\cal B}_{j}(a,p,q), $ $j=1,2,$ are not enough to uniquely determine the potentials $p(x),q(x)$. In section 4 we study Ambarzumian-type inverse problem for Dirac operators with constant delay.

\section{Characteristic functions and spectra}\label{sec2}

Let $a \in [\pi/(N+1),\pi/N)$ for $N \in \mathbb{N}$ and $S(x,\lambda)=\begin{bmatrix}
 s_1(x,\lambda) \\ 
 s_2(x,\lambda) \end{bmatrix}$ be the fundamental (vector) solution of Eq. \eqref{1} satisfying the initial condition at the origin:
\[
 S(0,\lambda) = \begin{bmatrix}
 0 \\ 
 -1 \end{bmatrix}.
 \]
The solution $S(x,\lambda)$ is the unique solution of the following integral equation 
\begin{equation}\label{5}
S(x,\lambda) = S_0(x,\lambda) + \int_{a}^{x} Q(t)B^{-1}(\lambda (x-t)) S(t-a,\lambda) \dt 
\end{equation}
where
\[
S_0(x,\lambda)=\begin{bmatrix} 
\sin \lambda x \\ 
-\cos \lambda x \end{bmatrix},
\quad
 B(t)=\begin{bmatrix}
\sin t & \cos t\\
-\cos t & \sin t
\end{bmatrix}.
\]
The method of successive approximations gives
\begin{equation} \label{6}
\begin{split}
S(x,\lambda)&= \sum_{k=0}^{N} S_k(x,\lambda),\\
S_k(x,\lambda)&=\int_{ka}^{x} Q(t)B^{-1}(\lambda (x-t)) S_{k-1}(t-a,\lambda) \dt, \quad x> ka, \\ 
S_k(x,\lambda)&=0, \quad x\leq ka, \quad k=\overline{1,N}.
\end{split}
\end{equation}
According to \eqref{6}, we can calculate (see \cite{DjuricVo})
\begin{equation*}
S_1(x,\lambda)=\int_{a}^{x} Q(t_1) \begin{bmatrix} \cos \lambda (x-2t_1+a) \\ -\sin \lambda (x-2t_1+a) \end{bmatrix} \dt_1,
\end{equation*}
\begin{equation*}
S_2(x,\lambda)=\int_{2a}^{x} Q(t_1)   \int_{a}^{t_1-a} Q(t_2) \begin{bmatrix} \sin \lambda (x-2t_1+2t_2) \\ -\cos \lambda (x-2t_1+2t_2) \end{bmatrix} \dt_2 \dt_1.
\end{equation*}
In a similar way, we obtain
\begin{equation*}
\begin{split}
S_3(x,\lambda)=&\int_{3a}^{x} Q(t_1)\int_{2a}^{t_1-a} Q(t_2) \int_{a}^{t_2-a} Q(t_3) \\
&\begin{bmatrix} \cos \lambda (x-2t_1+2t_2-2t_3+a) \\ -\sin \lambda (x-2t_1+2t_2-2t_3+a) \end{bmatrix} \dt_3 \dt_2 \dt_1,
\end{split}
\end{equation*}
and consequently, by induction, we determine the form of the functions $S_k(x,\lambda)$ for the case $k=n$, $n\in\mathbb{N}$ is even, as 
\begin{equation} \label{7}
\begin{split}
S_n(x,\lambda)=& \int_{na}^{x} Q(t_1)\dt_1 \int_{(n-1)a}^{t_1-a}Q(t_2)\dt_2 \cdots \int_{a}^{t_{n-1}-a} Q(t_{n}) \\ 
&  \begin{bmatrix} \sin \lambda (x-2t_1+2t_2-2t_3+...-2t_{n-1}+2t_{n}) \\ -\cos \lambda (x-2t_1+2t_2-2t_3+...-2t_{n-1}+2t_{n}) \end{bmatrix}
\dt_{n},
\end{split}
\end{equation} 
and for the case $k=m$, $m\in \mathbb{N}$ is odd, as
\begin{equation} \label{8}
\begin{split}
S_m(x,\lambda)=& \int_{ma}^{x} Q(t_1)\dt_1 \int_{(m-1)a}^{t_1-a}Q(t_2)\dt_2 \cdots \int_{a}^{t_{m-1}-a} Q(t_{m})\\ 
&\begin{bmatrix} \cos \lambda (x-2t_1+2t_2-2t_3+...+2t_{m-1}-2t_{m}+a) \\ -\sin \lambda (x-2t_1+2t_2-2t_3+...+2t_{m-1}-2t_{m}+a) \end{bmatrix}
\dt_{m}.
\end{split}
\end{equation}
\\
The functions
\begin{equation}\label{9}
\Delta_j(\lambda):=s_{j}(\pi,\lambda), \ j=1,2, 
\end{equation}
are called the {\it characteristic functions} of the boundary value problems ${\cal B}_{j}(a,p,q)$. These functions are entire in $\lambda$ and their zeros coincide with the eigenvalues of ${\cal B}_{j}(a,p,q)$.

In order to express the characteristic functions in a more convenient way, let us introduce the notation for the functions $S_k (x,\lambda)$ given in Eq. (\ref{6}):
\[S_k(x,\lambda)= \begin{bmatrix}
 s_{1,k}(x,\lambda) \\ 
 s_{2,k}(x,\lambda) \end{bmatrix}, \ k=\overline{1,N}.
\]
Taking  Eq.s \eqref{5}, \eqref{7}, \eqref{8}, and \eqref{9} into account, we can rewrite the characteristic functions as 
\begin{equation}\label{10}
\Delta_j(\lambda)=s_j(\pi,\lambda)=
s_{j,0}(\pi,\lambda) + \sum_{k=1}^{N} s_{j,k}(\pi,\lambda), \ j=1,2. 
\end{equation}
Now let $k=\overline{1,N}$. Then, for the case $k=m$, $m$ is odd, the characteristic functions become
\begin{equation}\label{11}
\begin{split}
s_{j,m}(\pi,\lambda)=& \int_{ma}^{\pi} \dt_1 \int_{(m-1)a}^{t_1-a}\dt_2 \cdots \int_{a}^{t_{m-1}-a} \bigg(Q_{j,1}^{m}(t_1,t_2,\cdots,t_m)\\ 
&\cos \lambda\bigg(\pi-2t_1+2t_2 -2t_3+\cdots+2t_{m-1}- 2t_m+a  \bigg) \bigg)  \dt_m
\\ 
&- \int_{ma}^{\pi} \dt_1 \int_{(m-1)a}^{t_1-a}\dt_2 \cdots \int_{a}^{t_{m-1}-a} Q_{j,2}^{m}(t_1,t_2,\cdots,t_m)\\ 
& \sin\lambda\bigg(\pi-2t_1+2t_2 -2t_3+\cdots+2t_{m-1}- 2t_m+ a \bigg)\bigg)  \dt_m, 
\end{split}
\end{equation}
and for the case $k=n$, $n$ is even, 
\begin{equation} \label{12}
\begin{split}
s_{j,n}(\pi,\lambda)=& \int_{na}^{\pi} \dt_1 \int_{(n-1)a}^{t_1-a}\dt_2 \cdots \int_{a}^{t_{n-1}-a} \bigg(Q_{j,1}^{n}(t_1,t_2,\cdots,t_n) \\ 
&
\sin \lambda\bigg(\pi-2t_1+2t_2 -2t_3+\cdots-2t_{n-1}+ 2t_n  \bigg) \bigg)  \dt_n
\\ 
-& \int_{na}^{\pi} \dt_1 \int_{(n-1)a}^{t_1-a}\dt_2 \cdots \int_{a}^{t_{n-1}-a} Q_{j,2}^{n}(t_1,t_2,...,t_n)\\
& \cos\lambda\bigg(\pi-2t_1+2t_2 -2t_3+...-2t_{n-1}+2t_n \bigg)\bigg)  \dt_n.
\end{split}
\end{equation}
where we use the notation $Q_{j,l}^{k}(t_1,t_2,\cdots,t_k)$ for the entries in the $j-$th row and $l-$th column of the matrix
\[
Q^k(t_1,t_2,\cdots,t_k):= Q(t_1) Q(t_2)\cdots Q(t_k).
\]
One can easily show that next relations hold:
\begin{equation}\label{13}
\begin{array}{ll}
Q_{11}^k(t_1,t_2,\cdots,t_k)=(-1)^k Q_{22}^k(t_1,t_2,\cdots,t_k), \\
Q_{21}^k(t_1,t_2,\cdots,t_k)=(-1)^{k+1} Q_{12}^k(t_1,t_2,\cdots,t_k).
\end{array}
\end{equation}
It is obvious from \eqref{11} and \eqref{12} that the next asymptotic holds
\begin{equation}\label{14}
s_{j,k}(\pi,\lambda)= O \bigg(\exp\big(| \text{Im} \lambda|(\pi-ka)\big)\bigg), \ |\lambda|\to \infty.
\end{equation}
Taking equations \eqref{10},\eqref{11},\eqref{12} and \eqref{14} into account, we derive the asymptotics of the characteristic functions given in Eq. \eqref{10}, for $\ |\lambda|\to \infty,$
\begin{align} \nonumber
\Delta_1(\lambda)=& \sin\lambda\pi + \int_{a}^{\pi} p(t)\cos\lambda (\pi-2t_1+a) \dt_1 \\ \label{15} 
&- \int_{a}^{\pi} q(t)\sin\lambda (\pi-2t_1+a) \dt_1 +  O \bigg(\exp\big(| \text{Im} \lambda|(\pi-2a)\big)\bigg), \\ \nonumber
\Delta_2(\lambda)=& -\cos\lambda\pi + \int_{a}^{\pi} q(t)\cos\lambda (\pi-2t_1+a) \dt_1 \\ \label{16}
&+ \int_{a}^{\pi} p(t)\sin\lambda (\pi-2t_1+a) \dt_1 + O \bigg(\exp\big(| \text{Im}\lambda|(\pi-2a)\big)\bigg).
\end{align}
Therefore, the following theorem can be proved by using the standard method \cite[Lemma 3]{ButerinPY}, Rouche's theorem, and Eq.s \eqref{15}, \eqref{16}.
\begin{theorem}\label{th1}
The boundary value problems ${\cal B}_{j}(a,p,q),$ $j=1,2,$ have infinitely many eigenvalues
$\{\lambda_{n,j}\}_{n\in\mathbb{Z}}$ of the form
\begin{equation*}\label{17}
\lambda_{n,j}=n+\frac{1-j}{2}+ o(1),\ |n|\to \infty.
\end{equation*}
\end{theorem}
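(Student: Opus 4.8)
The plan is to treat each characteristic function as a small perturbation of an explicit trigonometric model whose zeros are known exactly, and then to localize the zeros by Rouch\'e's theorem. Write $\Delta_1(\lambda)=\Delta_1^0(\lambda)+g_1(\lambda)$ and $\Delta_2(\lambda)=\Delta_2^0(\lambda)+g_2(\lambda)$, where $\Delta_1^0(\lambda)=\sin\lambda\pi$ and $\Delta_2^0(\lambda)=-\cos\lambda\pi$ are the leading terms in \eqref{15} and \eqref{16}, and $g_j(\lambda)$ collects the two integral terms together with the $O(\exp(|\text{Im}\lambda|(\pi-2a)))$ remainder. The zeros of $\Delta_1^0$ are exactly the integers $\lambda=n$, and those of $\Delta_2^0$ are the half-integers $\lambda=n-\tfrac12$, which already match the claimed main terms $n+\tfrac{1-j}{2}$.

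First I would record two estimates for the remainders $g_j$. Since $t_1$ ranges over $[a,\pi]$, the argument $\pi-2t_1+a$ stays in $[a-\pi,\pi-a]$, so each integral term is $O(\exp(|\text{Im}\lambda|(\pi-a)))$; together with the stated remainder this gives the global bound $g_j(\lambda)=O(\exp(|\text{Im}\lambda|(\pi-a)))$. Because $\Delta_j^0$ has exponential order $\exp(|\text{Im}\lambda|\pi)$, the delay-induced factor $\exp(-a|\text{Im}\lambda|)$ makes $g_j$ strictly subordinate to $\Delta_j^0$ off the real axis. On a horizontal strip $|\text{Im}\lambda|\le C$ the Riemann--Lebesgue lemma, applied to the $L_2(a,\pi)$ functions $p,q$, yields the sharper conclusion $g_j(\lambda)=o(1)$ as $|\text{Re}\lambda|\to\infty$, uniformly in the strip.

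Next comes the local Rouch\'e step, which is the form in which \cite[Lemma 3]{ButerinPY} is used. Fix an arbitrarily small $\delta>0$. By the periodicity of $\Delta_j^0$ there is a constant $m_\delta>0$ with $|\Delta_1^0(\lambda)|\ge m_\delta$ on every circle $|\lambda-n|=\delta$ and $|\Delta_2^0(\lambda)|\ge m_\delta$ on every circle $|\lambda-(n-\tfrac12)|=\delta$. Such circles lie in a fixed strip, so by the uniform estimate above $|g_j(\lambda)|<m_\delta\le|\Delta_j^0(\lambda)|$ on them once $|n|$ is large enough. Rouch\'e's theorem then places exactly one zero of $\Delta_j$ inside each such circle; since $\delta$ was arbitrary, this is precisely the assertion $\lambda_{n,j}=n+\tfrac{1-j}{2}+o(1)$.

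Finally I would confirm that these are all the eigenvalues by a global counting argument on the expanding contours $|\lambda|=N+\tfrac12$ (for $j=1$, and analogously for $j=2$): on these contours the classical lower bound $|\Delta_j^0(\lambda)|\ge c\exp(|\text{Im}\lambda|\pi)$ combines with the subordination estimate to give $|g_j|<|\Delta_j^0|$, so $\Delta_j$ and $\Delta_j^0$ have the same number of zeros inside, which forces the bijective indexing $n\mapsto\lambda_{n,j}$. The main obstacle is the bookkeeping that makes the Rouch\'e estimates uniform in $n$---establishing the strip-uniform Riemann--Lebesgue decay of $g_j$ and the contour lower bounds for $\Delta_j^0$ simultaneously---together with matching the local and global counts so that no eigenvalue is missed or double-counted.
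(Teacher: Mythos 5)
Your argument is exactly the ``standard method'' the paper invokes: it cites \cite[Lemma 3]{ButerinPY} and Rouch\'e's theorem together with \eqref{15}--\eqref{16} without writing out the details, and your decomposition $\Delta_j=\Delta_j^0+g_j$, the strip-uniform Riemann--Lebesgue decay, the local Rouch\'e step on small circles, and the global count on expanding contours are precisely what that reference supplies. The proposal is correct and matches the paper's (sketched) proof.
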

\begin{lemma}\label{lem1}
The specification of the spectrum $\{\lambda_{n,j}\}_{n\in{\mathbb Z}}$,  $j=1,2,$
uniquely determines the characteristic function $\Delta_j(\lambda)$ by the formulae
\begin{equation*}\label{18}
\Delta_1(\lambda)=\pi(\lambda_{0,1}-\lambda)\prod_{|n|\in{\mathbb N}}\frac{\lambda_{n,1}-\lambda}n \exp\Big(\frac\lambda{n}\Big), \quad
\Delta_2(\lambda)=\prod_{n\in{\mathbb Z}}\frac{\lambda_{n,2}-\lambda}{n-1/2} \exp\Big(\frac\lambda{n-1/2}\Big).
\end{equation*}
\end{lemma}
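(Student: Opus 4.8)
The plan is to recover $\Delta_j$ from its zeros by Hadamard's factorization theorem and then to pin down the remaining exponential factor using the asymptotics already established. First I would record the two structural facts that the whole argument rests on. From the refined expansions \eqref{15}--\eqref{16} together with the bound \eqref{14} one sees that $\Delta_j(\lambda)$ is entire of order $1$ and of exponential type exactly $\pi$, with $\Delta_1(\lambda)\sim\sin\lambda\pi$ and $\Delta_2(\lambda)\sim-\cos\lambda\pi$ as $|\lambda|\to\infty$; in particular its indicator is $h_{\Delta_j}(\theta)=\pi|\sin\theta|$. By Theorem~\ref{th1} its zero set is exactly $\{\lambda_{n,j}\}_{n\in\mathbb Z}$ with $\lambda_{n,j}=n+\tfrac{1-j}{2}+o(1)$, and the same Rouch\'e argument shows that for $|n|$ large these zeros are simple; moreover $\sum_n|\lambda_{n,j}|^{-2}$ converges while $\sum_n|\lambda_{n,j}|^{-1}$ diverges, so the genus of the canonical product over them is $1$.

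Next I would introduce the right-hand sides of the asserted identities as candidate functions $P_j(\lambda)$, e.g. $P_1(\lambda)=\pi(\lambda_{0,1}-\lambda)\prod_{|n|\in\mathbb N}\frac{\lambda_{n,1}-\lambda}{n}\exp(\lambda/n)$, and likewise $P_2$. Using $\lambda_{n,j}-\big(n+\tfrac{1-j}{2}\big)=o(1)$ I would check that each factor equals $\big(1-\lambda/(n+\tfrac{1-j}{2})\big)\,e^{\lambda/(n+\frac{1-j}2)}\cdot\big(1+O(n^{-2})\big)$ uniformly on compacta, so the product converges locally uniformly and defines an entire function of order $1$ whose zero set is again precisely $\{\lambda_{n,j}\}$. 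Here it is convenient to note that replacing the convergence factor $e^{\lambda/n}$ by $e^{\lambda/\lambda_{n,1}}$ changes the product only by the convergent exponential $\exp\big(\lambda\sum_n(\lambda_{n,1}^{-1}-n^{-1})\big)$, which merely shifts the linear term to be fixed in the next step; the isolated factor $\pi(\lambda_{0,1}-\lambda)$ absorbs the zero near the origin for $j=1$.

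With both facts in hand the core step is short. The quotient $g_j:=\Delta_j/P_j$ is entire and zero-free, since numerator and denominator share the same zeros with the same multiplicities; being a ratio of two functions of order $1$ it has order at most $1$, so by Hadamard $g_j(\lambda)=e^{a_j+b_j\lambda}$ for constants $a_j,b_j$. To see that $b_j=0$ I would compare indicators: since $g_j$ is of completely regular growth, $h_{\Delta_j}=h_{P_j}+h_{g_j}$ with $h_{g_j}(\theta)=\mathrm{Re}\,(b_j e^{i\theta})$ and $h_{\Delta_j}=h_{P_j}=\pi|\sin\theta|$; evaluating this identity at $\theta=0$ and $\theta=\pm\pi/2$ forces $\mathrm{Re}\,b_j=\mathrm{Im}\,b_j=0$. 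Thus $g_j$ is constant, and its value is read off by dividing the leading asymptotics: comparing $\Delta_j$ against $P_j$ along the imaginary axis and using the normalizations $\sin\lambda\pi$ and $-\cos\lambda\pi$ from \eqref{15}--\eqref{16} yields $g_j\equiv1$, i.e.\ $\Delta_j=P_j$.

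The routine parts are the convergence of the products and the order/type bookkeeping; the one place that needs genuine care — and which I expect to be the main obstacle — is the determination of the constant multiple. Showing $g_j\equiv1$ requires the precise asymptotic prefactor of the infinite product along a ray, which in turn uses the $o(1)$ control of $\lambda_{n,j}-\big(n+\tfrac{1-j}{2}\big)$ from Theorem~\ref{th1} (ideally sharpened to an $\ell^2$-type estimate) to guarantee that $P_1(\lambda)/\sin\lambda\pi$ and $P_2(\lambda)/(-\cos\lambda\pi)$ tend to $1$ rather than to some undetermined constant. Once this asymptotic normalization is secured, the identity $\Delta_j=P_j$ follows exactly as above, which is precisely the statement of the lemma.
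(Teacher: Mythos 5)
Your proposal is correct and follows essentially the same route as the paper, which simply states that the lemma is proved ``similarly as in [Theorem 5]{Buterin} by using the Hadamard's factorization method'' --- precisely the Hadamard factorization, zero-matching, and asymptotic-normalization argument you spell out. Your added caution about needing slightly better than $o(1)$ control of $\lambda_{n,j}-(n+\tfrac{1-j}{2})$ to justify convergence of the canonical product and the normalization $g_j\equiv 1$ is well placed; that refinement is exactly what the cited reference supplies.
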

\begin{proof}
The proof can be done similarly as in \cite[Theorem 5]{Buterin} by using the Hadamard's factorization method. 
\end{proof}

 In addition to examining the asymptotics of the characteristic functions, solving the Ambarzumian-type inverse problem requires considering appropriate combinations of these functions and determining their asymptotic behavior. For that purpose let us introduce functions:

\begin{equation}\label{19}
\mathcal{L}(\lambda)=\dfrac{1}{2}\bigg(\Delta_1(\lambda)+\Delta_1(-\lambda) + i\big(\Delta_2(\lambda)-\Delta_2(-\lambda)\big)\bigg),
\end{equation}
\begin{equation}\label{20}
\mathcal{M}(\lambda)=\exp (i \lambda \pi)+\dfrac{1}{2} \bigg(\Delta_2(\lambda)+\Delta_2(-\lambda)+i\big(-\Delta_1(\lambda)+\Delta_1(-\lambda)\big)\bigg).
\end{equation} 
It follows from Eq.s \eqref{10}, \eqref{11}, \eqref{12}, \eqref{13}, \eqref{19} and \eqref{20} that
\begin{equation*} \label{21}
\mathcal{L} (\lambda) = \sum_{k=1}^{N} \mathcal{L}_k (\lambda), \;\;\;\ \mathcal{M} (\lambda)= \sum_{k=1}^{N} \mathcal{M}_k (\lambda), 
\end{equation*}
holds for
\begin{equation} \label{22}
\begin{split}
\mathcal{L}_k(\lambda)&=\int_{ka}^{\pi} \dt_1 \int_{(k-1)a}^{t_1-a}\dt_2 \cdots \int_{a}^{t_{k-1}-a} \bigg(Q_{2-\varphi(k),1}^{k}(t_1,t_2,\cdots,t_k) \\  
&\exp i\lambda\big(\pi-2t_1+2t_2 -2t_3+\cdots+(-1)^{k-1}2t_{k-1}+ (-1)^k 2t_k +a \varphi(k) \big) \bigg)  \dt_k,
\end{split}
\end{equation}
\begin{equation}\label{23}
\begin{split}
\mathcal{M}_k(\lambda)&=(-1)^{k+1}\int_{ka}^{\pi} \dt_1 \int_{(k-1)a}^{t_1-a}\dt_2 \cdots\int_{a}^{t_{k-1}-a} \bigg(Q_{1+\varphi(k),1}^{k}(t_1,t_2,...,t_k) \\  
&\exp i\lambda\big(\pi-2t_1+2t_2 -2t_3+...+(-1)^{k-1}2t_{k-1}+ (-1)^k 2t_k +a \varphi(k) \big) \bigg)  \dt_k,
\end{split}
\end{equation}
where $\varphi(2k)=0$ and $\varphi(2k-1)=1, \;k\in  \mathbb{N}.$ In particular, the following asymptotic formulas are valid for $\text{Im} \lambda>0$ as $|\lambda| \to \infty$:
\begin{equation*}
\mathcal{L}_k(\lambda)=O\big(\exp(-i\lambda(\pi-ka))\big), \ \mathcal{M}_k(\lambda)=O\big(\exp(-i\lambda(\pi-ka))\big).
\end{equation*}

\section{Iso-bispectral potentials}\label{sec3}

In this section we shall establish non-uniqueness of the solution of Inverse problem 1. Specifically, we construct an infinite family of iso-bispectral potentials $p_{\alpha}(x)$ and $q_{\beta}(x)$, $\alpha,\beta\in \mathbb{C}$, i.e. for which both problems ${\cal B}_{1}(a,p_{\alpha},q_{\beta})$  and ${\cal B}_{2}(a,p_{\alpha},q_{\beta})$ posses one and the same pair of spectra. For that purpose we use the ideas of the construction of the counterexample from the paper \cite{DjuricB3} and \cite{DjuricVo}. We restrict ourselves with potentials vanishing on the interval $(3a,\pi)$ and using (\ref{10},\ref{11},\ref{12}) we obtain the following representation of the characteristic functions
\begin{equation*}
\begin{split}
\Delta_1(\lambda):=&\sin \lambda\pi +\int_{a}^{3a} p(t)\cos \lambda(\pi-2t+a)\,dt -  \int_{a}^{3a} q(t)\sin \lambda(\pi-2t+a)\,dt \\
&+ \int_{2a}^{3a}  \int_{a}^{t-a} \big(p(t)p(s) + q(t)q(s) \big) \sin \lambda (\pi-2t+2s)\;ds\; dt\\
&+ \int_{2a}^{3a}  \int_{a}^{t-a} \big(q(t)p(s)-p(t)q(s)\big) \cos \lambda (\pi-2t+2s)\;ds\; dt,
\end{split}
\end{equation*}

\begin{equation*}
\begin{split}
\Delta_2(\lambda):=&-\cos \lambda\pi + \int_{a}^{3a} p(t) \sin \lambda(\pi-2t+a)\,dt  + \int_{a}^{3a} q(t)\cos \lambda(\pi-2t+a)\;dt \\
&- \int_{2a}^{3a}  \int_{a}^{t-a} \big( p(t)p(s)+q(t)q(s) \big) \cos \lambda (\pi-2t+2s)\;ds\; dt ,\\
&+ \int_{2a}^{3a}  \int_{a}^{t-a} \big(q(t)p(s)- p(t)q(s)\big) \sin \lambda (\pi-2t+2s)\;ds\; dt.
\end{split}
\end{equation*}
Let us assume that the delay $a$ is known. For fixed $a\in (0,\frac{\pi}{3})$ let us define the integral operator 
\begin{equation*}
\begin{split}
M_hf(x)=\int_{3a/2}^{7a/2-x} f(t) h\left(t+x-\frac{a}{2}\right) dt, \;\;x\in \left(\frac{3a}{2},2a\right),
\end{split}
\end{equation*}
for some non-zero real function $h(x) \in L_2(\frac{5a}{2},3a).$ 
\\
Operator $M_h$ is a non-zero compact Hermitian operator in $L_2(\frac{3a}{2},2a)$ and hence, it has at least one non-zero eigenvalue $\eta.$ Putting $h_m(x)=(-1)^mh(x)/\eta$ for $m\in \{0,1\}$, we obtain that $(-1)^m$ is an eigenvalue of the operator $M_{h_m}.$ Let $e_m(x)$ be a corresponding eigenfunction, i.e.
\begin{equation*}\label{31}
M_{h_m}(x)=(-1)^me_m(x), \;\; x\in (\frac{3a}{2},2a).
\end{equation*}
Now we construct the iso-bispectral family of functions
\begin{equation*}
\begin{split}
D=\{p_\alpha(x), q_\beta(x):\alpha,\beta\in \mathbb{C}\}
\end{split}
\end{equation*}
where
\begin{equation}\label{32}
p_\alpha(x)=\begin{cases} 0, &  x\in (0,\frac{3a}{2})\cup (2a,\pi),\\ 
\alpha e_1(x), &  x\in (\frac{3a}{2},2a), \end{cases}   
\end{equation} 
and
\begin{equation}\label{33}
q_\beta(x)=\begin{cases} 0,&  x\in (0,\frac{3a}{2})\cup (2a,\frac{5a}{2})\cup(3a,\pi),\\ 
\beta e_0(x), &  x\in (\frac{3a}{2},2a), \\
h(x), & x\in (\frac{5a}{2},3a).   \end{cases}  
\end{equation} 
We shall prove that the spectra of boundary value problem ${\cal B}_j(a,p_\alpha, q_\beta)$ for $j=1,2,$ is independent of $\alpha$ and $\beta$, hence, the solution of the Inverse problem 1 is not unique for the delay less than one-third of the interval length. 

\begin{theorem}
Let delay $a\in (0,\frac{\pi}{3}).$  The spectra 
$\{\lambda_{n,j}\}_{n\in\mathbb{Z}}$ of boundary value problem ${\cal B}_j(a,p_\alpha, q_\beta)$ for $j=1,2,$ is independent of $\alpha$ and $\beta$. 
\end{theorem}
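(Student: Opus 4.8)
Since the eigenvalues of ${\cal B}_j(a,p_\alpha,q_\beta)$ are exactly the zeros of the characteristic function $\Delta_j(\lambda)$, and since by Lemma~\ref{lem1} the spectrum and the characteristic function determine one another, it suffices to prove that $\Delta_1(\lambda)$ and $\Delta_2(\lambda)$ do not depend on $\alpha$ and $\beta$. Because $Q\equiv 0$ on $(3a,\pi)$, every term $s_{j,k}(\pi,\lambda)$ with $k\ge 3$ vanishes (its outer integration $\int_{ka}^{\pi}$ meets only the region where $Q=0$), so the two representations of $\Delta_1,\Delta_2$ displayed above are exact, and I would work with them throughout.

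The plan is first to substitute $p_\alpha$ and $q_\beta$ from \eqref{32}--\eqref{33} and to track precisely where $\alpha$ and $\beta$ enter. The outer variable $t$ of every double integral runs over $(2a,3a)$, whereas $p_\alpha$ is supported on $(\tfrac{3a}{2},2a)$; hence every product containing $p(t)$ (that is, the $p(t)p(s)$ and $p(t)q(s)$ terms) drops out, and only the $q(t)q(s)$ and $q(t)p(s)$ terms survive. In those the outer factor $q(t)$ is forced to be the fixed piece $h$ on $(\tfrac{5a}{2},3a)$, while the inner factor is the variable profile $\beta e_0(s)$ or $\alpha e_1(s)$. Consequently the whole $\alpha,\beta$-dependence is \emph{linear} and consists of two matching pieces: the single integrals (linear in the profiles) and these ``linearized'' double integrals (one profile multiplied by the fixed $h$).

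The decisive step is to show that, for each of $\alpha$ and $\beta$ and in each of $\Delta_1,\Delta_2$, the single-integral contribution and the double-integral contribution cancel. To this end I would, in each surviving double integral, perform the substitution $\sigma=t-s+\tfrac{a}{2}$, chosen so that the phase $\pi-2t+2s$ becomes $\pi-2\sigma+a$, i.e. exactly the phase carried by the matching single integral, and then integrate out the remaining variable. A direct check shows that the region $\{(t,s):t\in(\tfrac{5a}{2},3a),\,s\in(\tfrac{3a}{2},t-a)\}$ is carried onto $\{(\sigma,s):\sigma\in(\tfrac{3a}{2},2a),\,s\in(\tfrac{3a}{2},\tfrac{7a}{2}-\sigma)\}$, so that after integrating out $s$ the inner integral is precisely the action of the operator $M_{h_m}$ on $e_m$ --- with $m=1$ for the $p$-profile and $m=0$ for the $q$-profile. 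Invoking the eigenfunction relation $M_{h_m}e_m=(-1)^m e_m$ then replaces the inner integral by $(-1)^m e_m(\sigma)$, turning each double integral into a constant multiple of the corresponding single integral. Reading off the signs in the simplified forms of $\Delta_1,\Delta_2$, the factor $(-1)^m$ is exactly what annihilates the $\alpha$- and $\beta$-terms in all four cases, so $\Delta_1$ and $\Delta_2$ collapse to their $\alpha,\beta$-free parts; by the reduction above the two spectra are then independent of $\alpha$ and $\beta$.

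The hard part is this cancellation step, and it is delicate for two reasons. First, the change of variables must carry the triangular integration domain exactly onto the domain on which $M_{h_m}$ is defined, so that the inner integral is genuinely that operator and not a truncation of it; verifying the limits (that $\sigma$ sweeps $(\tfrac{3a}{2},2a)$ and the inner variable sweeps $(\tfrac{3a}{2},\tfrac{7a}{2}-\sigma)$) is precisely where $a\in(0,\tfrac{\pi}{3})$ and the prescribed supports of $p_\alpha,q_\beta$ are used. Second, and more subtly, the \emph{same} fixed piece $h$ drives both surviving double integrals --- the $q(t)p(s)$ integral governing $\alpha$ and the $q(t)q(s)$ integral governing $\beta$ --- so one must arrange, through the normalization $h_m=(-1)^m h/\eta$ and the choice of profiles, that the induced operator returns eigenvalue $-1$ on the $p$-profile $e_1$ and $+1$ on the $q$-profile $e_0$ simultaneously. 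Securing both signs at once from a single fixed function, and matching them against the $\sin$/$\cos$ signs in $\Delta_1$ and $\Delta_2$ so that every one of the four $\alpha,\beta$-contributions cancels rather than doubles, is the crux on which the whole construction turns.
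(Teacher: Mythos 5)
Your strategy matches the paper's proof essentially step for step: the paper likewise reduces the claim to showing that the characteristic functions $\Delta_1,\Delta_2$ are independent of $\alpha,\beta$, keeps only the $k=1,2$ terms, rewrites the surviving single and double integrals over a common phase (its kernels $K_1,K_2$ are precisely the outcome of your substitution $\sigma=t-s+\tfrac{a}{2}$), identifies the inner integral as the operator $M_h$ acting on the profiles, and cancels via the eigenfunction relations $M_{h_m}e_m=(-1)^m e_m$. The sign-matching point you single out as the crux is exactly where the paper's normalization $h_m=(-1)^m h/\eta$ and the assignment of $e_1$ to $p_\alpha$ versus $e_0$ to $q_\beta$ are used.
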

\begin{proof}
Taking into account that all multiple integrals except the double one in \eqref{11} and \eqref{12} are equal to zero, we obtain that the characteristic functions for the boundary value problem ${\cal B}_j(a,p_\alpha, q_\beta)$ have the form
\begin{equation}\label{34}
\Delta_1(\lambda)=\sin \lambda \pi + \int_{a/2}^{5a/2} K_1(x) \cos \lambda (\pi-2x)dx -  \int_{a/2}^{5a/2} K_2(x) \sin \lambda (\pi-2x)dx,
\end{equation}
\begin{equation}\label{35}
\Delta_2(\lambda)=-\cos \lambda \pi + \int_{a/2}^{5a/2} K_1(x) \sin \lambda (\pi-2x)dx +  \int_{a/2}^{5a/2} K_2(x) \cos \lambda (\pi-2x)dx,
\end{equation}
where 
\begin{equation*}
K_1(x)=\begin{cases} p(x+a/2), \hspace{1cm} x\in (a/2,a)\cup (2a,5a/2),\\ 
p(x+a/2)-\int\limits_{x+a}^{3a} p(t)q(t-x)dt +\int\limits_{x+a}^{3a} q(t)p(t-x)dt , \;\; x\in (a,2a), \end{cases} 
\end{equation*}

\begin{equation*}
K_2(x)=\begin{cases} q(x+a/2), \hspace{1cm} x\in (a/2,a)\cup (2a,5a/2),\\ 
q(x+a/2)-\int\limits_{x+a}^{3a} p(t)p(t-x)dt - \int\limits_{x+a}^{3a} q(t)q(t-x)dt , \;\; x\in (a,2a). \end{cases}
\end{equation*}
Then, from (\ref{32}) and (\ref{33}) we obtain 
\begin{equation*}
K_1(x-a/2)=\begin{cases} 0, &  x\in (0,\frac{3a}{2})\cup (2a,3a),\\ 
p_\alpha(x)+\int\limits_{3a/2}^{7a/2-x} p_\alpha(t)h(t+x-a/2)dt , &  x\in (\frac{3a}{2},2a), \end{cases} 
\end{equation*}
\begin{equation*}
K_2(x-a/2)=\begin{cases} 0, &  x\in (0,\frac{3a}{2})\cup (2a,\frac{5a}{2}),\\ 
q_\beta(x)-\int\limits_{3a/2}^{7a/2-x} q_\beta(t)h(t+x-a/2)dt , &  x\in (\frac{3a}{2},2a), \\
h(x), & x\in (\frac{5a}{2},3a).   \end{cases}
\end{equation*}
Since $e_0(x)$ and $e_1(x)$ are eigenfunctions of the operator $M_h$ corresponding to the eigenvalues $1$ and $-1$ respectively, we obtain
\begin{equation*}
K_1(x-a/2)=0, \;\;  x\in (a,3a),
\end{equation*}
and 
\begin{equation*}
K_2(x-a/2)=\begin{cases} 0,\ &  x\in (a,\frac{5a}{2}), \\
h(x), &  x\in (\frac{5a}{2},3a).   \end{cases}
\end{equation*}
Then, using (\ref{34}) and (\ref{35}) we conclude that characteristic functions for family of iso-bispectral potentials $D$ have the form
\begin{equation*}
\Delta_1(\lambda)=\sin \lambda \pi -  \int_{5a/2}^{3a} h(x) \sin \lambda (\pi-2x+a)dx,
\end{equation*}

\begin{equation*}
\Delta_2(\lambda)=-\cos \lambda \pi +  \int_{5a/2}^{3a} h(x) \cos \lambda (\pi-2x+a)dx,
\end{equation*}
i.e. they are independent of $\alpha$ and $\beta$.
\end{proof}

\section{Ambarzumian-type inverse problem}\label{sec4}

In Section 3, we proved that the solution to Inverse Problem 1 is not unique. Using the results from Section 3 and the paper \cite{DjuricVo}, we conclude that two spectra are not sufficient to recover the potentials when \( a \in (0, 2\pi/5) \). However, it is interesting to examine whether the uniqueness of the solution holds for the Ambarzumian-type inverse problem for Dirac operators with delay. In the papers \cite{ButerinPY, FreilingY}, the authors prove that the uniqueness of the solution holds for the Ambarzumian-type inverse problem for Sturm-Liouville operators with delay. For this reason, we consider boundary value problems \(\widetilde{\mathcal{B}}_j = \mathcal{B}_j(\widetilde{p}, \widetilde{q}, a)\), where \(\widetilde{p}(x) = \widetilde{q}(x) = 0\) for \( j = 1, 2 \). Since it is known that for \( a \in [2\pi/5, \pi) \), two spectra uniquely determine the potentials \( p(x) \) and \( q(x) \) (see \cite{ButerinD, DjuricVo}), we will further consider the case \( a \in (0, 2\pi/5) \).
\\
Let $\{\widetilde{\lambda}_{n,j}\}_{n \in \mathbb{Z}},$  be the eigenvalues of boundary value problems 
$\widetilde{\mathcal{B}}_j=\mathcal{B}_j(\widetilde{p},\widetilde{q},a),j=1,2.$
Denote by
$\widetilde{\Delta}_j(\lambda)$ the characteristic function of 
$\widetilde{\mathcal{B}}_j,\;j=1,2$. From (\ref{10}),(\ref{11}) and (\ref{12}) we obtain that 
$\widetilde{\Delta}_1(\lambda)=\sin \lambda \pi$ and $\widetilde{\Delta}_2(\lambda)=-\cos \lambda \pi.$ Therefore, it is obvious that $\widetilde{\lambda}_{n,j}=n-j/2,$ $n \in \mathbb{Z}, \;j=1,2.$
\medskip
\noindent
\begin{theorem}\label{thm41}
Let the delay $a\in (0,\frac{2\pi}{5})$ is known. If $\lambda_{n,j}=\widetilde{\lambda}_{n,j}$ for all $n \in \mathbb{Z}$, $j=1,2,$ then $p(x)=q(x)=0$ a.e. on $(a,\pi).$ 
\end{theorem}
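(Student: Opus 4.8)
The plan is to convert the hypothesis on the spectra into an identity for the two auxiliary functions $\mathcal L,\mathcal M$ and then to peel off the potentials shell by shell in the frequency variable. First I would invoke Lemma~\ref{lem1}: since $\lambda_{n,j}=\widetilde\lambda_{n,j}$ for all $n,j$, the Hadamard products defining $\Delta_j$ and $\widetilde\Delta_j$ coincide, so $\Delta_1(\lambda)=\sin\lambda\pi$ and $\Delta_2(\lambda)=-\cos\lambda\pi$. Substituting these into \eqref{19} and \eqref{20} and using $\sin(-\lambda\pi)=-\sin\lambda\pi$, $\cos(-\lambda\pi)=\cos\lambda\pi$ and $e^{i\lambda\pi}=\cos\lambda\pi+i\sin\lambda\pi$, a direct computation gives $\mathcal L(\lambda)\equiv 0$ and $\mathcal M(\lambda)\equiv 0$. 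This is precisely the point of these particular combinations: they annihilate the free parts $\sin\lambda\pi$, $-\cos\lambda\pi$, $e^{i\lambda\pi}$ and retain only the potential-dependent terms.

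Next I would exploit the representations \eqref{22} and \eqref{23}. In each $\mathcal L_k,\mathcal M_k$ I would change variables from $(t_1,\dots,t_k)$ to the single phase $\theta=\pi-2t_1+2t_2-\cdots+(-1)^k2t_k+a\varphi(k)$, integrating out the remaining variables over the corresponding level sets. This writes $\mathcal L_k(\lambda)=\int V_k(\theta)e^{i\lambda\theta}\,d\theta$ and $\mathcal M_k(\lambda)=\int W_k(\theta)e^{i\lambda\theta}\,d\theta$ with $L^1$ kernels $V_k,W_k$ supported in $[ka-\pi,\pi-ka]$; for $k=1$ one has $V_1(\theta)=\tfrac12 p\big((\pi+a-\theta)/2\big)$ and $W_1$ the analogue in $q$, while for $k\ge 2$ the kernels are built from the products $Q^k_{\ast,1}$, i.e.\ from bilinear and higher expressions in $p,q$. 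Since $\mathcal L=\int(\sum_k V_k)e^{i\lambda\theta}\,d\theta$ vanishes identically in $\lambda$ and the integrand is compactly supported, uniqueness of the Fourier--Laplace transform forces $\sum_{k=1}^N V_k=0$ a.e., and likewise $\sum_{k=1}^N W_k=0$.

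The core is then an inductive peeling based on the nested supports $[ka-\pi,\pi-ka]$. On the outermost shell $(\pi-2a,\pi-a)$ only $V_1,W_1$ are present, every kernel with $k\ge2$ being supported in $[2a-\pi,\pi-2a]$; hence $V_1=W_1=0$ there, which yields $p=q=0$ on $(a,3a/2)$, and the symmetric shell $(a-\pi,2a-\pi)$ yields $p=q=0$ on $(\pi-a/2,\pi)$. The mechanism driving the induction is that each term with $k\ge2$ carries the constraints $t_{i+1}<t_i-a$, so it can be nonzero only when the joint support of $(p,q)$ has diameter at least $(k-1)a$; thus, once $p$ and $q$ vanish outside some interval $[c,d]$, the supports of all nonlinear kernels shrink and a fresh outer shell is again governed by $V_1,W_1$ alone. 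At a stage with current support $[c,d]$ one checks that the linear kernel protrudes beyond the quadratic one by $2c-a>0$ (since $c\ge3a/2$), so the peeling interval is nonempty and a portion of length of order $a$ is stripped from the support. The diameter therefore decreases by a fixed amount at every stage; after finitely many steps it falls below $a$, at which point all nonlinear kernels vanish identically, leaving $V_1=W_1=0$, i.e.\ $p=q=0$ a.e.\ on $(a,\pi)$.

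The hard part will be the bookkeeping in the inductive step: one must verify that at each stage no higher-order kernel $V_k,W_k$ with $k\ge3$ protrudes past the quadratic one into the window where the linear term is isolated, and that the peeling window stays nonempty down to diameter $a$. This reduces to computing, for the current support $[c,d]$, the exact range of the alternating phase $\pi-2t_1+\cdots$ under the ordering constraints and comparing the left endpoints of the supports of $V_1,V_2,\dots$; I expect these to become elementary inequalities in $a,c,d$. The restriction $a<2\pi/5$ merely delimits the range left open by earlier work (for $a\ge2\pi/5$ uniqueness is already known from \cite{ButerinD,DjuricVo}), while the peeling mechanism itself should operate for every $a<\pi/2$. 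Throughout, the relations $\sum V_k=0$ and $\sum W_k=0$ must be used in tandem, since the nonlinear kernels mix $p$ and $q$, so that each step eliminates $p$ and $q$ on the same subinterval simultaneously.
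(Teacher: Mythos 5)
Your plan is, at its core, the same argument the paper gives, translated from complex-analytic into real-variable Fourier language: both proofs use Lemma~\ref{lem1} to get $\Delta_1(\lambda)=\sin\lambda\pi$, $\Delta_2(\lambda)=-\cos\lambda\pi$, hence $\mathcal L\equiv\mathcal M\equiv 0$, and then isolate the linear terms $\mathcal L_1,\mathcal M_1$ by exploiting that their phase range protrudes beyond that of all the nonlinear terms, peeling the support of $p,q$ in finitely many steps. Where you disintegrate each $\mathcal L_k$ into a compactly supported kernel $V_k(\theta)$ and invoke uniqueness of the Fourier transform, the paper encodes exactly the same support information as exponential growth rates in the half-plane $\mathrm{Im}\,\lambda>0$ (formulas \eqref{25}--\eqref{29}) and extracts the protruding piece of $\mathcal L_1$ via Liouville's theorem applied to the bounded entire functions $F,G$; these are two faces of the Paley--Wiener correspondence. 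Two remarks on the part you defer. First, the ``bookkeeping'' you flag as the hard step is precisely the content of the paper's Step~(2); the paper sidesteps most of it by peeling only from the right end of the support (by $a/2$ per step, keeping the left endpoint at $a$), so that the quadratic term is always the binding one and the bound $\mathcal L_k(\lambda)=O\big(\exp(-i\lambda(\pi-(\nu+2)a))\big)$ for $k>2$ is immediate. Second, in your two-sided version the binding constraint is not always $V_2$: with current support $[c,d]$ one has $\inf\mathrm{supp}\,V_2\ge\pi-2(d-c)$ but $\inf\mathrm{supp}\,V_3\ge\pi-2d+3a$, so once $c>3a/2$ the cubic kernel dips below the quadratic one and the protrusion of $V_1$ is $\min(2c,3a)-a$ rather than $2c-a$; the window remains nonempty (of length at least $2a$ in the phase variable), so your induction still closes, but the inequality you quote is not the sharp one. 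With that correction, and noting that the left-end peeling stalls after the first step at $c=3a/2$ (which is why the paper handles the interval $(a,2a)$ separately in its Step~(1)), your proposal is a valid alternative write-up of the same proof.
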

\begin{proof}
Let us assume that $\lambda_{n,j}$=$\widetilde{\lambda}_{n,j},$ $n\in\mathbb{Z},$ $j=1,2,$ holds. By virtue of Lemma \ref{lem1} one has
\[\Delta_1(\lambda)=\sin \lambda\pi, \quad  \Delta_2(\lambda)=-\cos \lambda\pi,\]
and consequently $\mathcal{L}(\lambda)=0$ and $\mathcal{M}(\lambda)=0.$ From (\ref{19}) it follows that
\begin{equation}\label{25}
\mathcal{L}_1(\lambda) = -\mathcal{L}^+(\lambda), \quad\mathcal{M}_1(\lambda) = -\mathcal{M}^+(\lambda),
\end{equation}
where for $k\geq2,$
\[\mathcal{L}^+(\lambda)=\sum_{k=2}^{N} \mathcal{L}_k(\lambda), \quad \mathcal{M}^+(\lambda)=\sum_{k=2}^{N} \mathcal{M}_k(\lambda),\] 
and for $k=1,$  $\mathcal{L}^+(\lambda)=0$, $\mathcal{M}^+(\lambda)=0$.
Let notice that
\begin{equation}\label{26}
\begin{split}
\mathcal{L}_1(\lambda)&=\int_{a}^{\pi} p(t_1) \exp \big(i \lambda (\pi-2t_1+a)\big) dt_1, \\ \mathcal{M}_1(\lambda)&=\int_{a}^{\pi} q(t_1) \exp \big(i \lambda (\pi-2t_1+a)\big) dt_1.
\end{split}
\end{equation}

\noindent
Now we divide the proof of the theorem in three steps. 

\noindent
\paragraph{Step (1):} Consider the case when \( p(x) = q(x) = 0 \) a.e. on the interval \( (2a, \pi) \). In this scenario, we have \(\mathcal{L}^+ (\lambda) = 0\) and \(\mathcal{M}^+ (\lambda) = 0\). From equation \((\ref{25})\), it follows that \(\mathcal{L}_1 (\lambda) = \mathcal{M}_1 (\lambda) = 0\). Consequently, from equation \((\ref{26})\), we deduce that \( p(x) = q(x) = 0 \) a.e. on the interval \( (a, \pi) \).

\noindent
\paragraph{Step (2):} In this step, we observe the case when \( p(x) = q(x) = 0 \) a.e. on the interval \( (\pi - \nu a / 2, \pi) \) for a fixed \(\nu = \overline{0, 2N-3}\). We will show that in this case \( p(x) = q(x) = 0 \) a.e. on the interval \( (\pi - (\nu + 1)a / 2, \pi) \). We consider the case \(\pi - \nu a / 2 > 2a\); otherwise, we arrive at Step (1) and the proof is complete. Then from  (\ref{22},\ref{23}) we obtain
\[
\begin{split}
\mathcal{L}_2(\lambda)=\int_{2a}^{\pi-\nu a/2} dt_1 \int_{a}^{t_1-a} Q_{2,1}^{2}(t_1,t_2) \exp \big(-i \lambda (-\pi+2t_1-2t_2)\big) dt_2, \\
\mathcal{M}_2(\lambda)=\int_{2a}^{\pi-\nu a/2} dt_1 \int_{a}^{t_1-a} Q_{1,1}^{2}(t_1,t_2) \exp \big(-i \lambda (-\pi+2t_1-2t_2)\big) dt_2
\end{split}
\]

\noindent
In the last integrals we have that $-\pi+2t_1-2t_2 \in (2a-\pi, \pi-(\nu+2)a)$ and for $\text{Im} \lambda>0$, $|\lambda|\to \infty,$ the next asymptotic formulas hold:

\begin{equation}\label{29}
\begin{split}
\mathcal{L}_2(\lambda)= O \big(\exp(-i \lambda (\pi-(\nu+2)a))\big), \\
\mathcal{M}_2(\lambda)= O \big(\exp(-i \lambda (\pi-(\nu+2)a))\big).
\end{split}
\end{equation}
For $k > 2$,  the functions $\mathcal{L}_k(\lambda),$ $\mathcal{M}_k(\lambda),$ have less growth than the right-hand side in (\ref {29}). This means that 
\begin{equation}\label{27}
\begin{split}
\mathcal{L}^+ (\lambda) =  O \big(\exp(-i \lambda (\pi-(\nu+2)a))\big), \\
\mathcal{M}^+ (\lambda) =  O \big(\exp(-i \lambda (\pi-(\nu+2)a))\big).
\end{split}
\end{equation}
It follows from (\ref{25},\ref{26},\ref{27}) that for $\text{Im} \lambda>0$, $|\lambda|\to \infty,$ 
\begin{equation*}
\begin{split}
\int_{a}^{\pi-\nu a/2} p(t_1) \exp \big(-i \lambda (-\pi+2t_1-a)\big) dt_1= O \big(\exp(-i \lambda (\pi-(\nu+2)a))\big), \\
\int_{a}^{\pi-\nu a/2} q(t_1) \exp \big(-i \lambda (-\pi+2t_1-a)\big) dt_1=O \big(\exp(-i \lambda (\pi-(\nu+2)a))\big),
\end{split}
\end{equation*}
or, which is the same, as
\begin{equation}\label{28}
\begin{split}
\exp(i \lambda (2\pi-(\nu+1)a))\int_{a}^{\pi-\nu a/2} p(t_1) \exp \big(-2i \lambda t_1)\big) dt_1= O \big(1\big), \\
\exp(i \lambda (2\pi-(\nu+1)a))\int_{a}^{\pi-\nu a/2} q(t_1) \exp \big(-2i \lambda t_1)\big) dt_1= O \big(1\big).
\end{split}
\end{equation}
Let us introduce the functions
\[\begin{split}
F(\lambda)=\exp(i \lambda (2\pi-(\nu+1)a))\int_{\pi-(\nu+1)a/2}^{\pi-\nu a/2} p(t_1) \exp \big(-2i \lambda t_1)\big) dt_1, \\
G(\lambda)=\exp(i \lambda (2\pi-(\nu+1)a))\int_{\pi-(\nu+1)a/2}^{\pi-\nu a/2} q(t_1) \exp \big(-2i \lambda t_1)\big) dt_1.
\end{split}
\]
The functions $F(\lambda),$ $G(\lambda)$ are entire in $\lambda$. One can show that  $F(\lambda)=O \big(1\big),$ $G(\lambda)=O \big(1\big),$  for $\text{Im} \lambda\leq0,$ $|\lambda|\to\infty.$ On the other hand, from (\ref{28}) follows that $F(\lambda)=O \big(1\big),$ $G(\lambda)=O \big(1\big),$  for $\text{Im} \lambda>0,$ $|\lambda|\to\infty$.\\
By Liouville's theorem \cite[page 77]{Conway}, it follows that $F(\lambda)\equiv c_1$ and $G(\lambda)\equiv c_2,$ where $c_1, c_2 $ are constants. 
Since $F(\lambda)=o(1),$ $G(\lambda)=o(1)$ for $\lambda \in \mathbb{R}$ and $|\lambda|\to \infty,$ we conclude that $F(\lambda)\equiv 0$ and $G(\lambda)\equiv0.$ Therefore, it holds 
\[
\int_{\pi-(\nu+1)a/2}^{\pi-\nu a/2} p(t_1) \exp \big(-2i \lambda t_1)\big) dt_1=0, \quad \int_{\pi-(\nu+1)a/2}^{\pi-\nu a/2} q(t_1) \exp \big(-2i \lambda t_1)\big) dt_1=0.
\]
This yields $p(x)=q(x)=0$ a.e. on interval $(\pi-(\nu+1)a/2, \pi-\nu a/2).$

\noindent
\paragraph{Step (3):} Applying Step (2) successively for \(\nu = 0, 1, \ldots, 2N-3\), we obtain that \( p(x) = q(x) = 0 \) a.e. on the interval \( (\pi - (N-1)a, \pi) \). Since \((2a, \pi) \subseteq (\pi - (N-1)a, \pi)\), from Step (1) we conclude that \( p(x) = q(x) = 0 \) a.e. on the interval \( (a, \pi) \). Theorem \ref{thm41} is proved.

\end{proof}

\vspace{0.8cm}
\bibliographystyle{plain}
\bibliography{references}

\begin{thebibliography}{10}

\bibitem{Albeverio}
S.~Albeverio, R.~Hryniv, and Ya.~V. Mykytyuk.
\newblock Inverse spectral problems for {Dirac} operators with summable potentials.
\newblock {\em Russ. J. Math. Phys.}, 12(4):406--423, 2005.

\bibitem{BondarenkoY}
N.~Bondarenko and V.~Yurko.
\newblock An inverse problem for {Sturm}-{Liouville} differential operators with deviating argument.
\newblock {\em Appl. Math. Lett.}, 83:140--144, 2018.

\bibitem{Buterin}
S.~Buterin.
\newblock On the uniform stability of recovering sine-type functions with asymptotically separated zeros.
\newblock {\em Math Notes}, 111:343--355, 2022.

\bibitem{ButerinD}
S.~Buterin and N.~Djuri{\'c}.
\newblock Inverse problems for {Dirac} operators with constant delay: uniqueness, characterization, uniform stability.
\newblock {\em Lobachevskii J. Math.}, 43(6):1492--1501, 2022.

\bibitem{ButerinPY}
S.~A. Buterin, M.~Pikula, and V.~A. Yurko.
\newblock Sturm-{Liouville} differential operators with deviating argument.
\newblock {\em Tamkang J. Math.}, 48(1):61--71, 2017.

\bibitem{Conway}
J.~B. Conway.
\newblock {\em Functions of one complex variable {II}}.
\newblock Graduate texts in mathematics. Springer, New York, NY, 1 edition, 1996.

\bibitem{DjuricB1}
N.~Djuri{\'c} and S.~Buterin.
\newblock On an open question in recovering {Sturm}-{Liouville}-type operators with delay.
\newblock {\em Appl. Math. Lett.}, 113:No 106862, 2021.

\bibitem{DjuricB2}
N.~Djuri{\'c} and S.~Buterin.
\newblock On non-uniqueness of recovering {Sturm}-{Liouville} operators with delay.
\newblock {\em Commun. Nonlinear Sci. Numer. Simul.}, 102:No 105900, 2021.

\bibitem{DjuricB3}
N.~Djuri{\'c} and S.~Buterin.
\newblock Iso-bispectral potentials for {Sturm}-{Liouville}-type operators with small delay.
\newblock {\em Nonlinear Anal., Real World Appl.}, 63:No 103390, 2022.

\bibitem{DjuricVo}
N.~Djuri{\'c} and B.~Vojvodi{\'c}.
\newblock Inverse problem for {Dirac} operators with a constant delay less than half the length of the interval.
\newblock {\em Appl. Anal. Discrete Math.}, 17:249--261, 2023.

\bibitem{Eckhardt}
J.~Eckhardt, A.~Kostenko, and G.~Teschl.
\newblock Inverse uniqueness results for one-dimensional weighted {Dirac} operators.
\newblock In {\em Spectral theory and differential equations: V. A. Marchenko's 90th anniversary collection}, pages 117--133. Providence, RI: American Mathematical Society (AMS), 2014.

\bibitem{Erneux}
T.~Erneux.
\newblock {\em Applied delay differential equations}.
\newblock Springer, Berlin, 2009.

\bibitem{FreilingY}
G.~Freiling and V.~A. Yurko.
\newblock Inverse problems for {Sturm}-{Liouville} differential operators with a constant delay.
\newblock {\em Appl. Math. Lett.}, 25(11):1999--2004, 2012.

\bibitem{Gasymov}
M.~G. Gasymov and T.~T. D\v{z}abiev.
\newblock Solution of the inverse problem by two spectra for the {D}irac equation on a finite interval.
\newblock {\em Akad. Nauk Azerba\u{\i}d\v{z}an. SSR Dokl.}, 22(7):3--6, 1966.

\bibitem{Norkin}
S.~B. Norkin.
\newblock {\em Differential equations of the second order with retarded argument. {Some} problems of the theory of vibrations of systems with retardation. {Translated} from the {Russian} by {L}. {J}. {Grimm} and {K}. {Schmitt}}, volume~31 of {\em Transl. Math. Monogr.}
\newblock American Mathematical Society (AMS), Providence, RI, 1972.

\bibitem{Pikula}
M.~Pikula.
\newblock Determination of a {S}turm-{L}iouville-type differential operator with retarded argument from two spectra.
\newblock {\em Mat. Vesnik}, 43(3-4):159--171, 1991.

\bibitem{PikulaVV}
M.~Pikula, V.~Vladi{\v{c}}i{\'c}, and B.~Vojvodi{\'c}.
\newblock Inverse spectral problems for {Sturm}-{Liouville} operators with a constant delay less than half the length of the interval and {Robin} boundary conditions.
\newblock {\em Results in Mathematics}, 74(1):No 45, 2019.

\bibitem{Polyanin}
A.~D. Polyanin, V.~G. Sorokin, and A.~I. Zhurov.
\newblock {\em Delay ordinary and partial differential equations}.
\newblock Adv. Appl. Math. (Boca Raton). Boca Raton, FL: CRC Press, 2024.

\bibitem{Smith}
H.~L. Smith.
\newblock {\em An introduction to delay differential equations with applications to the life sciences}, volume~57.
\newblock springer New York, 2011.

\bibitem{VladicicBV}
V.~Vladi{\v{c}}i{\'c}, M.~Bo{\v{s}}kovi{\'c}, and B.~Vojvodi{\'c}.
\newblock Inverse problems for {Sturm}-{Liouville}-type differential equation with a constant delay under {Dirichlet}/polynomial boundary bonditions.
\newblock {\em Bulletin of the Iranian Mathematical Society}, 48(4):1829--1843, 2022.

\bibitem{VojvodicDV}
B.~Vojvodi{\'c}, N.~Djuri{\'c}, and V.~Vladi{\v c}i{\'c}.
\newblock On recovering {Dirac} operators with two delays.
\newblock {\em Complex Analysis and Operator Theory}, 18(5):No 105, 2023.

\bibitem{VojvodicKC}
B.~Vojvodi{\'c}, N.~Pavlovi{\'c} Komazec, and F.A. {\c{C}}etinkaya.
\newblock Recovering differential operators with two retarded arguments.
\newblock {\em Bolet{\'\i}n de la Sociedad Matem{\'a}tica Mexicana}, 28(3):No 68, 2022.

\bibitem{VojvodicV}
B.~Vojvodic and V.~Vladicic.
\newblock Recovering differential operators with two constant delays under {Dirichlet}/{Neumann} boundary conditions.
\newblock {\em Journal of Inverse and Ill-posed Problems}, 28(2):237--241, 2020.

\bibitem{VojvodicV2}
B.~Vojvodic and V.~Vladicic.
\newblock On recovering {Sturm}–{Liouville} operators with two delays.
\newblock {\em Journal of Inverse and Ill-posed Problems}, 2024.
\newblock doi.org/10.1515/jiip-2023-0093.

\bibitem{VojvodicVD}
B.~Vojvodi{\'c}, V.~Vladi{\v c}i{\'c}, and N.~Djuri{\'c}.
\newblock Inverse problem for {Dirac} operators with two constant delays.
\newblock {\em Journal of Inverse and Ill-posed Problems}, 32(3):573--586, 2024.

\bibitem{WangY}
F.~Wang and C.-F. Yang.
\newblock Inverse problems for {Dirac} operators with a constant delay less than half of the interval.
\newblock {\em J. Math. Phys.}, 65(3):No 032706, 2024.

\bibitem{WangYBD}
F.~Wang, C.-F. Yang, S.~Buterin, and N.~Djuri{\'c}.
\newblock Inverse spectral problems for {Dirac}-type operators with global delay on a star graph.
\newblock {\em Anal. Math. Phys.}, 14(2):No 24, 2024.

\end{thebibliography}

\medskip
\noindent
{\bf Neboj\v{s}a Djuri\'{c}}\\
Faculty of Electrical Engineering,\\
University of Banja Luka, \\
Bosnia and Herzegovina. \\
E-mail: nebojsa.djuric@etf.unibl.org

\medskip
\noindent
{\bf Biljana Vojvodi\'{c}}\\
Faculty of Mechanical Engineering,\\
University of Banja Luka, \\
Banja Luka, Bosnia and Herzegovina. \\
E-mail: biljana.vojvodic@mf.unibl.org

\end{document}